\DeclareMathOperator{\Id}{Id} 
\begin{document}

\title[Natural connections with torsion expressed by the metrics]
{Natural connections with torsion expressed by the metric tensors
on almost contact manifolds with B-metric}

\author[M. Manev, M. Ivanova]{Mancho Manev, Miroslava Ivanova}

%%% Theorem Dike Envirouments

%%%% Local Definitions start here
\frenchspacing

\newcommand{\ie}{i.e. }
\newcommand{\X}{\mathfrak{X}}
\newcommand{\W}{\mathcal{W}}
\newcommand{\F}{\mathcal{F}}
\newcommand{\T}{\mathcal{T}}
\newcommand{\U}{\mathcal{U}}
\newcommand{\LL}{\mathcal{L}}
\newcommand{\TT}{\mathfrak{T}}
\newcommand{\M}{(M,\f,\xi,\eta,g)}
\newcommand{\Lf}{(G,\f,\xi,\eta,g)}
\newcommand{\R}{\mathbb{R}}
\newcommand{\s}{\mathfrak{S}}
\newcommand{\n}{\nabla}
\newcommand{\nn}{\tilde{\nabla}}
\newcommand{\tg}{\tilde{g}}
\newcommand{\f}{\varphi}
\newcommand{\D}{{\rm d}}
\newcommand{\id}{{\rm id}}
\newcommand{\al}{\alpha}
\newcommand{\bt}{\beta}
\newcommand{\gm}{\gamma}
\newcommand{\dt}{\delta}
\newcommand{\lm}{\lambda}
\newcommand{\ta}{\theta}
\newcommand{\om}{\omega}
\newcommand{\ea}{\varepsilon_\alpha}
\newcommand{\eb}{\varepsilon_\beta}
\newcommand{\eg}{\varepsilon_\gamma}
\newcommand{\sx}{\mathop{\mathfrak{S}}\limits_{x,y,z}}
\newcommand{\norm}[1]{\left\Vert#1\right\Vert ^2}
\newcommand{\nf}{\norm{\n \f}}
\newcommand{\Span}{\mathrm{span}}
\newcommand{\grad}{\mathrm{grad}}
\newcommand{\thmref}[1]{The\-o\-rem~\ref{#1}}
\newcommand{\propref}[1]{Pro\-po\-si\-ti\-on~\ref{#1}}
\newcommand{\secref}[1]{\S\ref{#1}}
\newcommand{\lemref}[1]{Lem\-ma~\ref{#1}}
\newcommand{\dfnref}[1]{De\-fi\-ni\-ti\-on~\ref{#1}}
\newcommand{\corref}[1]{Corollary~\ref{#1}}
%\newcommand{\eqref}[1]{(\ref{#1})}

%\renewcommand{\thefootnote}{\fnsymbol{footnote}}

% THEOREM Environments ---------------------------------------------------
%\newtheorem{thm}{Theorem}
%\newtheorem{lem}[thm]{Lemma}
%\newtheorem{prop}[thm]{Proposition}
%\newtheorem{cor}[thm]{Corollary}
%\newdefinition{rmk}{Remark}
%\newdefinition{ack}{Acknowledgements}
%\newproof{pf}{Proof}
%\newproof{pot}{Proof of Theorem \ref{thm-geom}}

\numberwithin{equation}{section}
\newtheorem{thm}{Theorem}[section]
\newtheorem{lem}[thm]{Lemma}
\newtheorem{prop}[thm]{Proposition}
\newtheorem{cor}[thm]{Corollary}

\theoremstyle{definition}
\newtheorem{defn}{Definition}[section]

\hyphenation{Her-mi-ti-an ma-ni-fold ah-ler-ian}

%\keywords{ }

%\subjclass[2000]{} %

%%%% End of Local Definitions
%{\small
%\begin{abstract}
%\end{abstract}

\begin{abstract}
On a main class of the almost contact manifolds with B-metric, it
is described the family of the linear connections preserving the
manifold's structures by 4 parameters. In this family there are
determined the canonical-type connection and the connection with
zero parameters.
\end{abstract}

\keywords{Almost contact manifold; B-metric; natural connection;
canonical connection; torsion tensor.}

\subjclass[2000]{53C05, 53C15, 53C50.}

%%% ----------------------------------------------------------------------
\maketitle
%%% ----------------------------------------------------------------------

%\begin{center}
%\today
%\end{center}

%\setcounter{tocdepth}{2} \tableofcontents

\section*{Introduction}

The investigations in the differential geometry of the almost
contact manifolds with B-metric is initiated in \cite{GaMiGr}.
These manifolds are the odd-dimensional extension of the almost
complex manifolds with Norden metric and the case of indefinite
metrics corresponding to the almost contact metric manifolds. The
geometry of the considered manifolds is the geometry of the both
of the structures --- the almost contact structure and the
B-metric. There are important the linear connections with respect
to which the structures are parallel. In the general case such
connections, which is called natural connections, are a countless
number. There are interesting those natural connections, which
torsion tensor is expressed by the metric tensors and the
structural 1-forms of the investigated manifold. In our case, this
condition restricts the manifolds to the class of the considered
manifolds having a conformally equivalent structure of the
parallel almost contact B-metric structure with respect to the
Levi-Civita connection.

Such a problem on Riemannian product manifolds is studied in
\cite{Dobr11-1} and \cite{Dobr11-2}.

The present paper\footnote{Partially supported by project
NI11-FMI-004 of the Scientific Research Fund, Paisii Hilendarski
University of Plovdiv, Bulgaria} is organized as follows. In
Sec.~1 we give some necessary facts about the considered
manifolds.
In Sec.~2 we express the set of the torsion tensors of the natural
connections in terms of the metric tensors on an almost contact
manifold with B-metric as a 4-parametric family.
In Sec.~3 we determine the $\f$-canonical connection and the
natural connection with zero parameters in the mentioned family.

\section{Preliminaries}\label{sec:1}

Let $(M,\f,\xi,\eta,g)$ be an almost contact manifold with
B-metric or an \emph{almost contact B-metric manifold}, \ie $M$ is
a $(2n+1)$-dimensional differen\-tia\-ble manifold with an almost
contact structure $(\f,\xi,\eta)$ consisting of an endomorphism
$\f$ of the tangent bundle, a vector field $\xi$, its dual 1-form
$\eta$ as well as $M$ is equipped with a pseudo-Riemannian metric
$g$  of signature $(n,n+1)$, such that the following algebraic
relations are satisfied
\begin{gather}
\f\xi = 0,\quad \f^2 = -\Id + \eta \otimes \xi,\quad
\eta\circ\f=0,\quad \eta(\xi)=1, \label{str}\nonumber\\%
g(x, y ) = - g(\f x, \f y ) + \eta(x)\eta(y) \label{g}
\end{gather}
for arbitrary $x$, $y$ of the algebra $\X(M)$ on the smooth vector
fields on $M$.

Further, $x$, $y$, $z$ will stand for arbitrary elements of
$\X(M)$.

The associated metric $\tilde{g}$ of $g$ on $M$ is defined by
\begin{equation}\label{tg}
\tilde{g}(x,y)=g(x,\f y)\allowbreak+\eta(x)\eta(y).
\end{equation}
Both metrics
$g$ and $\tilde{g}$ are necessarily of signature $(n,n+1)$. The
manifold $(M,\f,\xi,\eta,\tilde{g})$ is also an almost contact
B-metric manifold.

The covariant derivatives of $\f$, $\xi$, $\eta$ with respect to
the Levi-Civita connection $\n$ play a fundamental role in the
differential geometry on the almost contact manifolds.  The
structural tensor $F$ of type (0,3) on $\M$ is defined by
$%\begin{equation*}\label{F=nfi}
F(x,y,z)=g\bigl( \left( \nabla_x
\f \right)y,z\bigr). $ %\end{equation*}
It has the following properties:
\begin{equation*}\label{F-prop}
\begin{split}
F(x,y,z)&=F(x,z,y)%\\[4pt]
=F(x,\f y,\f z)+\eta(y)F(x,\xi,z) +\eta(z)F(x,y,\xi).
\end{split}
\end{equation*}
The relations of $\n\xi$ and $\n\eta$ with $F$ are:
\begin{equation*}\label{Fxieta}
    \left(\n_x\eta\right)y=g\left(\n_x\xi,y\right)=F(x,\f y,\xi).
\end{equation*}

The following 1-forms are associated with $F$:
\begin{equation*}\label{titi}
\begin{array}{c}
\ta(z)=g^{ij}F(e_i,e_j,z),\quad \ta^*(z)=g^{ij}F(e_i,\f e_j,z),\quad%\\[4pt]
\om(z)=F(\xi,\xi,z),
\end{array}
\end{equation*}
where $g^{ij}$ are the components of the inverse matrix of $g$
with respect to a basis $\left\{e_i;\xi\right\}$
$(i=1,2,\dots,2n)$ of the tangent space $T_pM$ of $M$ at an
arbitrary point $p\in M$. Obviously, the equality $\om(\xi)=0$ and
the relation $\ta^*\circ\f=-\ta\circ\f^2$ are always valid.
%\begin{equation*}\label{tata*}
%\end{equation*}

A classification of the almost contact manifolds with B-metric
with respect to $F$ is given in \cite{GaMiGr}. This classification
includes eleven basic classes $\F_1$, $\F_2$, $\dots$, $\F_{11}$.
Their intersection is the special class $\F_0$ determined by the
condition $F(x,y,z)=0$. Hence $\F_0$ is the class of almost
contact B-metric manifolds with $\n$-parallel structures, i.e.
$\n\f=\n\xi=\n\eta=\n g=\n \tilde{g}=0$.

In the present paper we consider the manifolds from the so-called
main classes $\F_1$, $\F_4$, $\F_5$ and $\F_{11}$. These classes
are the only classes where the tensor $F$ is expressed by the
metrics $g$ and $\tilde{g}$. They are defined as follows:
\begin{equation}\label{Fi}
\begin{split}
\F_{1}:\quad &F(x,y,z)=\frac{1}{2n}\bigl\{g(x,\f y)\ta(\f z)+g(\f
x,\f y)\ta(\f^2 z)
\bigr\}_{(y\leftrightarrow z)};\\[4pt]
\F_{4}:\quad &F(x,y,z)=-\frac{\ta(\xi)}{2n}\bigl\{g(\f x,\f y)\eta(z)+g(\f x,\f z)\eta(y)\bigr\};\\[4pt]
\F_{5}:\quad &F(x,y,z)=-\frac{\ta^*(\xi)}{2n}\bigl\{g( x,\f y)\eta(z)+g(x,\f z)\eta(y)\bigr\};\\[4pt]
\F_{11}:\quad
&F(x,y,z)=\eta(x)\left\{\eta(y)\om(z)+\eta(z)\om(y)\right\},
\end{split}
\end{equation}
where (for the sake of brevity) we use the denotation
$\{A(x,y,z)\}_{(y\leftrightarrow z)}$ --- instead of
$\{A(x,y,z)+A(x,z,y)\}$ for any tensor $A(x,y,z)$.

Let us remark that the class
$\F_1\oplus\F_4\oplus\F_5\oplus\F_{11}$, determined by
(\cite{Man-diss})
\begin{equation}\label{F14511}
\begin{split}
    F(x,y,z)=-\frac{1}{2n}\bigl\{&g(\f x,\f y)\ta(z)+g(
x,\f y)\ta^*(z)\\[4pt]
&-2n\,\eta(x)\eta(y)\om(z)\bigr\}_{(y\leftrightarrow z)},
\end{split}
\end{equation}
is the odd-dimensional analogue of the class $\W_1$ of the
conformal K\"ahler manifolds of the corresponding almost complex
manifold with Norden metric, introduced in \cite{GrMeDj}.

%\section{$\f$-Canonical Connection}

\begin{defn}[\cite{Man31}]\label{defn-natural}
A linear connection $D$ is called a \emph{natural connection} on
the manifold  $(M,\f,\allowbreak\xi,\eta,g)$ if the almost contact
structure $(\f,\xi,\eta)$ and the B-metric $g$ are parallel with
respect to $D$, \ie $D\f=D\xi=D\eta=Dg=0$.
\end{defn}
As a corollary, the associated metric $\tilde{g}$ is also parallel
with respect to a natural connection $D$ on $\M$.

According to \cite{ManIv36}, a necessary and sufficient condition
a linear connection $D$ to be natural on $\M$ is $D\f=Dg=0$.

If $T$ is the torsion of $D$, i.e. $T(x,y)=D_x y-D_y x-[x, y]$,
then the corresponding tensor of type (0,3) is determined by
$T(x,y,z)=g(T(x,y),z)$.

Let us denote the difference between the natural connection $D$
and the Levi-Civita connection $\n$ of $g$ by $Q(x,y)=D_xy-\n_xy$
and the corresponding tensor of type (0,3) --- by
$Q(x,y,z)=g\left(Q(x,y),z\right)$.

\begin{prop}[\cite{Man31}]\label{prop-nat-con} A linear connection $D$
is a natural connection on an almost contact B-metric manifold if and only if %
\begin{gather}
 Q(x,y,\f z)-Q(x,\f y,z)=F(x,y,z),%\label{1a}
 \nonumber\quad
 %\\%
 Q(x,y,z)=-Q(x,z,y).
 %\label{1b}
 \nonumber
\end{gather}
\end{prop}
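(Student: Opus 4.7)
The plan is to invoke the criterion recalled just before the proposition --- that $D$ is natural on $\M$ precisely when $D\f=0$ and $Dg=0$ --- and to translate each of these two tensor identities into a condition on the difference tensor $Q$, writing $D=\n+Q$. Each translation will be an equivalence, so the two directions of the iff follow simultaneously from the same computations.

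For the metric condition, expanding $(D_xg)(y,z)=xg(y,z)-g(D_xy,z)-g(y,D_xz)$, substituting $D=\n+Q$, and using $\n g=0$ produces $(D_xg)(y,z)=-Q(x,y,z)-Q(x,z,y)$, so $Dg=0$ is equivalent to the skew-symmetry $Q(x,y,z)=-Q(x,z,y)$. For the $\f$-compatibility, I would compute $(D_x\f)y=(\n_x\f)y+Q(x,\f y)-\f Q(x,y)$, pair with $z$ via $g$, and use the definition of $F$ to get $g\bigl((D_x\f)y,z\bigr)=F(x,y,z)+Q(x,\f y,z)-g(\f Q(x,y),z)$. To rewrite the last term I would invoke the identity $g(\f u,v)=g(u,\f v)$, which follows from the B-metric relation combined with $\f^2=-\Id+\eta\otimes\xi$ and $g(\f u,\xi)=0$. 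This turns $D\f=0$ into $F(x,y,z)+Q(x,\f y,z)-Q(x,y,\f z)=0$, exactly the first displayed identity.

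The only subtle point is the little symmetry lemma $g(\f u,v)=g(u,\f v)$, and since it is a direct consequence of the B-metric axioms already collected in Section~1, no substantial obstacle is anticipated; the remainder is a routine substitution of $D=\n+Q$ into the two defining conditions of a natural connection.
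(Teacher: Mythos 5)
Your proof is correct and takes the expected route: the paper itself gives no proof of this proposition (it is quoted from \cite{Man31}), and your argument --- writing $D=\n+Q$, converting $Dg=0$ into the skew-symmetry $Q(x,y,z)=-Q(x,z,y)$ and $D\f=0$ into $Q(x,y,\f z)-Q(x,\f y,z)=F(x,y,z)$ via the identity $g(\f u,v)=g(u,\f v)$, and closing the equivalence with the criterion $D\f=Dg=0$ recalled just before the statement --- is exactly the standard derivation.
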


Hence and $T(x,y)=Q(x,y)-Q(y,x)$ we have the equality of Hayden's
theorem \cite{Hay}
\begin{equation}\label{Hay}
Q(x,y,z)=\frac{1}{2}\left\{T(x,y,z)-T(y,z,x)+T(z,x,y)\right\}.
\end{equation}

\begin{defn}[\cite{ManIv38}]\label{defn-canonical}
A natural connection $D$ is called a \emph{$\f$-cano\-nic\-al
connection} on the manifold $(M,\f,\xi,\allowbreak\eta,g)$ if the
torsion tensor $T$ of $D$ satisfies the following identity:
\begin{equation}\label{T-can}
\begin{split}
    &\bigl\{T(x,y,z)-T(x,\f y,\f z)
    -\eta(x)\left\{T(\xi,y,z)
    -T(\xi, \f y,\f z)\right\}\\[4pt]
    &-\eta(y)\left\{T(x,\xi,z)-T(x,z,\xi)-\eta(x)T(z,\xi,\xi)\right\}\bigr\}_{[y\leftrightarrow
    z]}=0,
\end{split}
\end{equation}
where we use the denotation $\left\{A(x, y,
z)\right\}_{[y\leftrightarrow z]}$ instead of $A(x, y, z) -
A(x,z,y)$ for any tensor $A(x, y, z)$.
\end{defn}

Let us remark that the restriction the $\f$-canonical connection
$D$ of the manifold $\M$ on the contact distribution $\ker(\eta)$
is the unique canonical connection of the corresponding almost
complex manifold with Norden metric, studied in \cite{GaMi87}.

In \cite{ManGri2}, it is introduced a natural connection on $\M$,
defined by
\begin{equation}\label{fB}
    \n^0_xy=\n_xy+Q^0(x,y),
\end{equation}
where    $Q^0(x,y)=\frac{1}{2}\bigl\{\left(\n_x\f\right)\f
y+\left(\n_x\eta\right)y\cdot\xi\bigr\}-\eta(y)\n_x\xi$.
Therefore, we have
\begin{equation*}\label{Q0}
Q^0(x,y,z)=\frac{1}{2}\left\{F(x,\f y,z)+\eta(z)F(x,\f
y,\xi)-2\eta(y)F(x,\f z,\xi)\right\}.
\end{equation*}
In \cite{ManIv37}, the connection determined by \eqref{fB} is
called a \emph{$\f$B-connection}. It is studied for some classes
of $\M$ in \cite{ManGri2}, \cite{Man3}, \cite{Man4} and
\cite{ManIv37}. The $\f$B-connection is the odd-dimensional
analogue of the B-connection on the corresponding almost complex
manifold with Norden metric, studied for the class $\W_1$ in
\cite{GaGrMi}.

In \cite{ManIv38}, it is proved that the $\f$-canonical connection
and the $\f$B-connection coincide on the almost contact B-metric
manifolds in the class
$\F_1\oplus\F_2\oplus\F_4\oplus\F_5\oplus\F_6\oplus\F_8\oplus\F_9\oplus\F_{10}\oplus\F_{11}$.
Therefore, these connections coincide also in the class
$\F_1\oplus\F_4\oplus\F_5\oplus\F_{11}$.

The torsion of the $\f$-canonical connection has the following
form (\cite{ManIv37})
\begin{equation}\label{T0}
\begin{split}
T^0(x,y,z)=\frac{1}{2}\bigl\{&F(x,\f y,z)+\eta(z)F(x,\f y,\xi)
+2\eta(x)F(y,\f z,\xi)\bigr\}_{[x\leftrightarrow y]}.
\end{split}
\end{equation}

In \cite{ManIv36}, it is given a classification of the linear
connections on the almost contact B-metric manifolds with respect
to their torsion tensors $T$ in 11 classes $\T_{ij}$. The
characteristic conditions of these basic classes are the
following:
%\begin{subequations}
\begin{equation}\label{Tij}
\begin{split}
\T_{11/12}:\quad
    &T(\xi,y,z)=T(x,y,\xi)=0,\quad \\[4pt]
    &T(x,y,z)=-T(\f x,\f y,z)=\mp T(x,\f y,\f z);\\[4pt]
%\end{split}
%\end{equation}
%\begin{equation}
%\begin{split}
%
\T_{13}:\quad &T(\xi,y,z)=T(x,y,\xi)=0,\quad \\[4pt]
            &T(x,y,z)-T(\f x,\f y,z)=\sx T(x,y,z)=0;\\[4pt]
\T_{14}:\quad &T(\xi,y,z)=T(x,y,\xi)=0,\quad \\[4pt]
            &T(x,y,z)-T(\f x,\f y,z)=\sx T(\f x,y,z)=0;\\[4pt]
\T_{21/22}:\quad &T(x,y,z)=\eta(z)T(\f^2 x,\f^2 y,\xi)=\mp \eta(z)T(\f x,\f y,\xi);\\[4pt]
\T_{31/32}:\quad &T(x,y,z)=\eta(x)T(\xi,\f^2 y,\f^2 z)-\eta(y)T(\xi,\f^2 x,\f^2 z),\\[4pt]
                &T(\xi,y,z)=\pm T(\xi,z,y)=-T(\xi,\f y,\f z); \\[4pt]
\T_{33/34}:\quad &T(x,y,z)=\eta(x)T(\xi,\f^2 y,\f^2 z)-\eta(y)T(\xi,\f^2 x,\f^2 z),\\[4pt]
                &T(\xi,y,z)=\pm T(\xi,z,y)=T(\xi,\f y,\f z); \\[4pt]
\T_{41}:\quad
&T(x,y,z)=\eta(z)\left\{\eta(y)\hat{t}(x)-\eta(x)\hat{t}(y)\right\},
\end{split}
\end{equation}
%\end{subequations}
where $\s$ stands for the cyclic sum by three arguments and the
former and latter subscripts of $\T_{ij/kl}$ correspond to upper
and down signs plus or minus, respectively.

According to \cite{ManIv36}, the $\f$B-connection (therefore, in
our case, the $\f$-canonic\-al connection) belongs to the
following class
\begin{equation*}\label{classT0}
\T_{12}\oplus\T_{13}\oplus\T_{14}\oplus\T_{21}\oplus\T_{22}\oplus\T_{31}\oplus\T_{32}\oplus\T_{33}\oplus\T_{34}\oplus\T_{41}.
\end{equation*}

In \cite{ManIv38}, the basic classes of the almost contact
B-metric manifolds are characterized by conditions for the torsion
of the $\f$-canonical connection. For the classes under
consideration we have:
\[
\begin{array}{rl}
\F_1:\; &T^0(x,y)=\frac{1}{2n}\left\{t^0(\f^2 x)\f^2 y-t^0(\f^2 y)\f^2 x%\right.\\
            %&\phantom{T^0(x,y)=\frac{1}{2n}}\left.
            +t^0(\f x)\f y-t^0(\f y)\f x\right\}; \\[4pt]
\F_4:\; &T^0(x,y)=\frac{1}{2n}t^{0*}(\xi)\left\{\eta(y)\f x-\eta(x)\f y\right\};\\[4pt]
\F_5:\; &T^0(x,y)=\frac{1}{2n}t^0(\xi)\left\{\eta(y)\f^2 x-\eta(x)\f^2 y\right\};\\[4pt]
\F_{11}:\; &T^0(x,y)=\left\{\hat{t^0}(x)\eta(y)-\hat{t^0}(y)\eta(x)\right\}\xi.\\[4pt]
\end{array}
\]
Moreover, it is given the following correspondence between the
classes $\F_i$ of $M$ and the classes $\T_{jk}$ of $T^0$:
\begin{equation}\label{MFiT0Tjk}
\begin{array}{l}
M\in\F_1\; \Leftrightarrow \; T^0\in\T_{13},\; t^0\neq 0; \\[4pt]
M\in\F_4\; \Leftrightarrow \; T^0\in\T_{31},\; t^0= 0,\;
t^{0*}\neq 0;
\\[4pt]
M\in\F_5\; \Leftrightarrow \; T^0\in\T_{31},\; t^0\neq 0,\;
t^{0*}= 0;
\\[4pt]
M\in\F_{11}\; \Leftrightarrow \; T^0\in\T_{41}.
\end{array}
\end{equation}

\section{Natural Connections in the Main Classes}

Let $\M$ be an almost contact B-metric manifold  belonging to the
main classes $\F_1$, $\F_4$, $\F_5$ and $\F_{11}$. The goal in the
present section is a determination of the general form of the
torsion $T$ of a natural connection $D$ on $\M$.

Bearing in mind the form of the torsion of the $\f$-canonical
connection $\eqref{T0}$ and characteristic conditions \eqref{Fi}
of the mentioned classes, we expect that the torsion $T$ is
expressed by the components from \eqref{g} and \eqref{tg} of the
metrics $g$ and $\tilde{g}$, respectively.

Let the torsion $T$ of a linear connection $D$ have the following
form
\begin{equation}\label{T-nat-g}
\begin{split}
T(x,y,z)&
=g(\f y,\f z)\vartheta_1(x)+g(y,\f z)\vartheta_2(x)+\eta(y)\eta(z)\vartheta_3(x)\\[4pt]
& -g(\f x,\f z)\vartheta_1(y)-g(x,\f
z)\vartheta_2(y)-\eta(x)\eta(z)\vartheta_3(y),
\end{split}
\end{equation}
where, for $\lm_i\in\R$ ($i=1,2,\dots,18$), there are denoted the
following 1-forms:
\begin{equation}\label{Ai-nat-g}
\begin{split}
\vartheta_1(x)& =\lm_1\ta(\f^2x)+\lm_3\ta(\xi)\eta(x)+\lm_5\om(x)
\\[4pt]
&+\lm_2\ta^*(\f^2x)+\lm_4\ta^*(\xi)\eta(x)+\lm_6\om(\f x),
\\[4pt]
\vartheta_2(x)&
=\lm_7\ta(\f^2x)+\lm_9\ta(\xi)\eta(x)+\lm_{11}\om(x)
\\[4pt]
&+\lm_8\ta^*(\f^2x)+\lm_{10}\ta^*(\xi)\eta(x)+\lm_{12}\om(\f x),
\\[4pt]
\vartheta_3(x)&
=\lm_{13}\ta(\f^2x)+\lm_{15}\ta(\xi)\eta(x)+\lm_{17}\om(x)
\\[4pt]
&+\lm_{14}\ta^*(\f^2x)+\lm_{16}\ta^*(\xi)\eta(x)+\lm_{18}\om(\f
x).
\end{split}
\end{equation}

We set the condition $D$ to be a natural connection on
$(M,\f,\xi,\allowbreak\eta,g)$. According to
\propref{prop-nat-con} and \eqref{Hay}, we obtain the following
relation
\begin{equation}\label{F=T}
\begin{split}
F(x,y,z)=\frac{1}{2}\bigl\{&T(x,y,\f z)-T(y,\f z,x)+T(\f
z,x,y)\\[4pt]
&-T(x,\f y,z)+T(\f y,z,x)-T(z,x,\f y)\bigr\}.
\end{split}
\end{equation}
Then, applying \eqref{T-nat-g} and \eqref{Ai-nat-g} to \eqref{F=T}
and comparing with \eqref{F14511}, we obtain the following
conditions for the parameters:
\[
\lm_1+\lm_8=\lm_5-\lm_{12}=\lm_6+\lm_{11}=0,\quad
\lm_2-\lm_7=-\lm_4=\lm_9=\frac{1}{2n},
\]
\[
\lm_3=\lm_{10}=\lm_{13}=\lm_{14}=\lm_{15}=\lm_{16}=\lm_{17}=0,\quad
\lm_{18}=-1.
\]
In consequence, the 1-forms from \eqref{Ai-nat-g} take the form
\begin{equation}\label{Ai-nat-g=}
\begin{split}
\vartheta_1(x)&
=\lm_1\ta(\f^2x)+\lm_2\ta^*(\f^2x)-\frac{1}{2n}\ta^*(\xi)\eta(x)
\\[4pt]
&+\lm_5\om(x)+\lm_6\om(\f x),
\\[4pt]
\vartheta_2(x)&
=\left(\lm_2-\frac{1}{2n}\right)\ta(\f^2x)-\lm_1\ta^*(\f^2x)+\frac{1}{2n}\ta(\xi)\eta(x)
\\[4pt]
&-\lm_{6}\om(x)+\lm_{5}\om(\f x),
\\[4pt]
\vartheta_3(x)& =-\om(\f x).
\\[4pt]
\end{split}
\end{equation}

Further, we use the following tensors derived by the structural
tensors of $\M$:
\[
\begin{split}
\pi_1(x,y)z&=\bigl\{g(y,z)x\bigr\}_{[x\leftrightarrow y]},\qquad
\pi_2(x,y)z=\bigl\{g(y,\f z)\f x\bigr\}_{[x\leftrightarrow y]},
\\[4pt]
\pi_3(x,y)z&=-\bigl\{g(y,z)\f x+g(y, \f z)
x\bigr\}_{[x\leftrightarrow y]},
\\[4pt]
\pi_4(x,y)z&=\bigl\{\eta(y)\eta(z)x +g(y,
z)\eta(x)\xi\bigr\}_{[x\leftrightarrow y]},\\[4pt]
\pi_5(x,y)z&=\bigl\{\eta(y)\eta(z)\f x +g(y,\f
z)\eta(x)\xi\bigr\}_{[x\leftrightarrow y]},
\end{split}
\]
as well as the corresponding tensors
$\pi_i(x,y,z,w)=g\left(\pi_i(x,y)z,w\right)$, $i=1,\dots,5$, of
type (0,4). Let us note that the latter tensors are curvature-like
tensors, i.e. they have the properties of the curvature tensor
$R=\left[\n,\n\right]-\n_{[,]}$ of type (0,4).

In \eqref{Ai-nat-g=}, we rename the parameters as follows
$\al_1=\lm_1$, $\al_2=\lm_2$, $\al_3=\lm_5$, $\al_4=\lm_6$. Then,
using \eqref{T-nat-g} and \eqref{Ai-nat-g=}, we establish the
truthfulness of the following
\begin{thm}\label{thm:nat-con}
The torsion of any natural connection on an almost contact
B-metric manifold in $\F_1\oplus\F_4\oplus\F_5\oplus\F_{11}$ is
expressed by
\begin{equation}\label{T-nat}
\begin{split}
    T(x,y,z)&=(\pi_3+\pi_5)(x,y,z,q)\\[4pt]
    &+\frac{1}{2n}(\pi_2+\pi_4)(x,y,z,a^*)+\frac{1}{2n}\pi_5(x,y,z,a)-\pi_5(x,y,z,\hat{a}),
\end{split}
\end{equation}
where $q=\al_1\f^2 a^*-\al_2\f^2 a-\al_3\f\hat{a}+\al_4\hat{a}$
$(\al_1,\al_2,\al_3,\al_4 \in \R)$ and $\ta=g(\cdot,a)$,
$\ta^*=g(\cdot,a^*)$, $\om=g(\cdot,\hat{a})$.
\end{thm}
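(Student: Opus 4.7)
The plan is to verify that the ansatz \eqref{T-nat-g}--\eqref{Ai-nat-g}, taken as the most general $(0,3)$-tensor antisymmetric in its first two arguments and built from the only tensorial data available in the main classes (namely $g$, $\tg$, $\eta$, and the three characteristic 1-forms $\ta, \ta^*, \om$ together with their $\f$-twists), is forced into the claimed 4-parameter family by the naturality condition, and then to recast the result in $\pi_i$-form. The setup is already prepared: by \propref{prop-nat-con} combined with Hayden's formula \eqref{Hay}, the naturality of $D$ is equivalent to the identity \eqref{F=T} expressing $F$ in terms of $T$. Since the manifold lies in $\F_1\oplus\F_4\oplus\F_5\oplus\F_{11}$, the left-hand side $F$ is explicitly given by \eqref{F14511}, so \eqref{F=T} becomes a linear equation in the $18$ parameters $\lm_1,\dots,\lm_{18}$.

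The core step is to substitute \eqref{T-nat-g} and \eqref{Ai-nat-g} into the right-hand side of \eqref{F=T}, expand using the elementary identities $g(\f x, \f y) = g(x,y) - \eta(x)\eta(y)$, $\eta\circ\f = 0$, $\f\xi = 0$, $g(\f x, y) = g(x, \f y)$ (which follows from \eqref{g}), and $\ta^*\circ\f = -\ta\circ\f^2$, and then collect terms according to the six irreducible building blocks in \eqref{F14511}: $g(\f x,\f y)\ta(z)$, $g(x,\f y)\ta^*(z)$, $\eta(x)\eta(y)\om(z)$ and their $y\leftrightarrow z$ partners. Matching coefficients decouples the resulting system into three blocks corresponding to the summands $\F_1$, $\F_4\oplus\F_5$, and $\F_{11}$, yielding the listed conditions $\lm_1+\lm_8 = \lm_5-\lm_{12} = \lm_6+\lm_{11} = 0$, $\lm_2-\lm_7 = -\lm_4 = \lm_9 = \tfrac{1}{2n}$, $\lm_{18} = -1$, and $\lm_3 = \lm_{10} = \lm_{13} = \dots = \lm_{17} = 0$. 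The four surviving free parameters are renamed $\al_1 = \lm_1$, $\al_2 = \lm_2$, $\al_3 = \lm_5$, $\al_4 = \lm_6$, giving \eqref{Ai-nat-g=}.

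The remaining task is a reformulation of the resulting torsion in the compact form \eqref{T-nat}. Introducing the $g$-duals $a, a^*, \hat a$ of $\ta, \ta^*, \om$ and setting $q = \al_1\f^2 a^* - \al_2\f^2 a - \al_3\f\hat a + \al_4\hat a$, each of the 1-forms \eqref{Ai-nat-g=} decomposes into a $g(\cdot, q)$-piece, pieces proportional to $\ta(\xi)\eta$ or $\ta^*(\xi)\eta$, and the $\om(\f\cdot)$-term, and each of these pieces, once paired with the $g(\f y, \f z)$, $g(y,\f z)$, or $\eta(y)\eta(z)$ factor and antisymmetrized in $x\leftrightarrow y$, recognizes itself as one of $\pi_3+\pi_5$, $\pi_2+\pi_4$, or $\pi_5$ with the appropriate fourth slot. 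The main obstacle is the bookkeeping in the second paragraph: the six-fold expansion of \eqref{F=T} applied to eighteen parameters generates a large redundant linear system, and one must track signs carefully when passing $\f$ across arguments and exploit the hidden identities $\ta^*\circ\f = -\ta\circ\f^2$ and $\om(\xi) = 0$, which cause several apparently independent coefficient equations to collapse; once this is under control, the reduction to $\pi_i$-form is a routine algebraic identification.
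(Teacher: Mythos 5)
Your proposal follows essentially the same route as the paper: assume the ansatz \eqref{T-nat-g}--\eqref{Ai-nat-g}, impose naturality through \propref{prop-nat-con} and Hayden's formula \eqref{Hay} to get \eqref{F=T}, compare with \eqref{F14511} to obtain exactly the stated linear conditions on $\lm_1,\dots,\lm_{18}$, rename the surviving parameters $\al_1,\dots,\al_4$, and reassemble the result via the tensors $\pi_2,\pi_3,\pi_4,\pi_5$ into \eqref{T-nat}. This matches the paper's own derivation, so the proposal is correct and not materially different.
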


Actually, the tensors in \eqref{T-nat} form a 4-parametric family.

By direct check, we establish that equality \eqref{T-nat} implies
the following
\begin{prop}\label{prop-sT}
The torsion of any natural connection on an almost contact
B-metric manifold in $\F_1\oplus\F_4\oplus\F_5\oplus\F_{11}$ has
the property
\[
\sx T(x,y,z)=0.
\]
\end{prop}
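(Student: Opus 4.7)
The plan is to exploit the structure of formula \eqref{T-nat}: each of its four summands has the form $\pi_i(x,y,z,w)$ with $i\in\{2,3,4,5\}$ and fourth argument $w\in\{q,a^*,a,\hat{a}\}$. Crucially, these fourth arguments depend only on the fixed vectors $a$, $a^*$, $\hat{a}$ (and their images under $\f$ and $\f^2$, via $q=\al_1\f^2 a^*-\al_2\f^2 a-\al_3\f\hat{a}+\al_4\hat{a}$), and not on the cyclic variables $x$, $y$, $z$.

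As noted immediately before \thmref{thm:nat-con}, each $\pi_i$ is a curvature-like $(0,4)$-tensor, so in particular it satisfies the first Bianchi identity
\[
\sx \pi_i(x,y,z,w)=0
\]
for every fixed $w$. Applying this termwise to \eqref{T-nat} yields at once $\sx T(x,y,z)=0$, which is the claim.

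Alternatively, a bare-hands verification starting from \eqref{T-nat-g} works just as well: the summands containing $\vartheta_1(x)$ and $\vartheta_3(x)$ are manifestly symmetric in $(y,z)$ because $g(\f\cdot,\f\cdot)$ and $\eta\otimes\eta$ are; those containing $\vartheta_2(x)$ become so once one uses $g(y,\f z)=g(z,\f y)$, which follows from \eqref{g} together with $\eta\circ\f=0$. The six terms antisymmetrised in $(x,y)$ then fall, under cyclic summation in $(x,y,z)$, into three pairs that cancel. There is no real obstacle here; the whole content of the proposition is the first Bianchi identity for the $\pi_i$'s applied with a frozen fourth slot, and the only thing worth checking is that $q$, $a$, $a^*$, $\hat{a}$ are genuinely independent of the summation variables, which is clear by construction.
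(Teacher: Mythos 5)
Your proof is correct and is essentially the paper's own argument: the paper disposes of this proposition "by direct check" from \eqref{T-nat}, which is exactly your computation. Packaging the check as the first Bianchi identity $\sx\pi_i(x,y,z,w)=0$ with the fourth slot frozen at the fixed vectors $q,a,a^*,\hat{a}$ (a property the paper itself records by calling the $\pi_i$ curvature-like) is a tidy way to organize it, and your fallback verification from \eqref{T-nat-g} — each term being of the form $S(x;y,z)-S(y;x,z)$ with $S$ symmetric in $(y,z)$ — confirms the same cancellation.
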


As a corollary, using \eqref{Hay} and \propref{prop-sT}, we obtain
immediately $Q(x,y,z)=T(z,y,x)$, i.e. we have the following
\begin{prop}\label{prop-QT}
Any natural connection $D$ with torsion $T$ on an almost contact
B-metric manifold from $\F_1\oplus\F_4\oplus\F_5\oplus\F_{11}$ is
determined by $T$ and the Levi-Civita connection $\n$ as follows:
\[
g\left(D_xy,z)\right)=g\left(\n_xy,z)\right)+T(z,y,x).
\]
\end{prop}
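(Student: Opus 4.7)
The plan is to reduce everything to Hayden's formula \eqref{Hay} together with the identity from \propref{prop-sT}, using only the skew-symmetry of $T$ in its first two arguments. First I would recall that by definition $Q(x,y,z)=g(D_xy,z)-g(\n_xy,z)$, so the claim amounts to showing $Q(x,y,z)=T(z,y,x)$.

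Next I would invoke \propref{prop-sT}, which gives
\begin{equation*}
T(x,y,z)+T(y,z,x)+T(z,x,y)=0,
\end{equation*}
and solve for one cyclic term, say $T(z,x,y)=-T(x,y,z)-T(y,z,x)$. Substituting into Hayden's equality \eqref{Hay}, the sum collapses:
\begin{equation*}
Q(x,y,z)=\tfrac{1}{2}\bigl\{T(x,y,z)-T(y,z,x)+T(z,x,y)\bigr\}=-T(y,z,x).
\end{equation*}
Finally I would use the standard skew-symmetry of the torsion in its first two slots, $T(y,z,x)=-T(z,y,x)$, to conclude $Q(x,y,z)=T(z,y,x)$, and hence
\begin{equation*}
g(D_xy,z)=g(\n_xy,z)+T(z,y,x).
\end{equation*}

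There is no real obstacle here; the argument is purely algebraic manipulation of the three-variable torsion tensor, and the only non-trivial input is the cyclic identity $\s_{x,y,z}T(x,y,z)=0$, which is supplied by \propref{prop-sT} (itself a consequence of the explicit form \eqref{T-nat} of the torsion in the class $\F_1\oplus\F_4\oplus\F_5\oplus\F_{11}$). One small point worth flagging in the write-up is that $Q$ is well-defined as a $(0,3)$-tensor because $D$ and $\n$ differ by a tensorial object, and that the skew-symmetry $T(x,y,z)=-T(y,x,z)$ is immediate from $T(x,y)=-T(y,x)$.
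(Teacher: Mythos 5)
Your proposal is correct and follows exactly the paper's route: combine Hayden's formula \eqref{Hay} with the cyclic identity of \propref{prop-sT} and the skew-symmetry of $T$ in its first two arguments to get $Q(x,y,z)=T(z,y,x)$. The paper states this as an immediate corollary; you have merely written out the short algebraic collapse explicitly, which is fine.
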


\section{Special Natural Connections in the Main Classes}

Bearing in mind that the $\f$-canonical connection (resp., the
$\f$B-connec\-tion) is a natural connection on $\M$, then we have
to determine its corresponding parameters $\al_i$ ($i=1,2,3,4$) in
\eqref{T-nat}.

\begin{thm}\label{thm:can-con}
The natural connection with torsion $T$ from the family
\eqref{T-nat} is the $\f$-canonical connection on a
$(2n+1)$-dimensional almost contact B-metric manifold from
$\F_1\oplus\F_4\oplus\F_5\oplus\F_{11}$ if and only if
$\al_1=\al_3=\al_4=0$, $\al_2=\frac{1}{4n}$. Then its torsion has
the form
\begin{equation}\label{T0-can}
\begin{split}
    T^0(x,y,z)&=\frac{1}{4n}(\pi_1+\pi_2+\pi_4)(x,y,z,a^*)\\[4pt]
    &+\frac{1}{2n}\pi_5(x,y,z,a)-\pi_5(x,y,z,\hat{a}).
\end{split}
\end{equation}
\end{thm}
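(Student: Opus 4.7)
The plan splits into two tasks: (i) identify the parameter values $\al_i$ in \eqref{T-nat} that realize the $\f$-canonical connection, and (ii) recast the resulting torsion in the compact form \eqref{T0-can}.

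For (i), I would compute $T^0$ directly from \eqref{T0} by substituting $F$ from \eqref{F14511}, simplifying with the basic identities $\f\xi=0$, $\eta\circ\f=0$, $\f^2=-\Id+\eta\otimes\xi$, the B-metric symmetry $g(\f x,y)=g(x,\f y)$, $\om(\xi)=0$, $\ta(\xi)=\eta(a)$, $\ta^*(\xi)=\eta(a^*)$, and the relation $\ta^*\circ\f=-\ta\circ\f^2$. The resulting $T^0$ has the structural shape \eqref{T-nat-g}; reading off the coefficients of the 1-forms $\ta(\f^2\cdot),\,\ta^*(\f^2\cdot),\,\ta(\xi)\eta(\cdot),\,\ta^*(\xi)\eta(\cdot),\,\om(\cdot),\,\om(\f\cdot)$ in $\vartheta_1,\vartheta_2,\vartheta_3$ and matching against \eqref{Ai-nat-g=} yields the unique solution $\al_1=\al_3=\al_4=0$, $\al_2=\frac{1}{4n}$. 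Because \thmref{thm:nat-con} asserts that \eqref{T-nat} parametrizes every natural torsion and the parametrization in \eqref{Ai-nat-g=} is linear in $(\al_1,\al_2,\al_3,\al_4)$, this both identifies the values and rules out any other choice, settling the ``if and only if''.

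For (ii), with these parameters $q$ reduces to $-\frac{1}{4n}\f^2 a=\frac{1}{4n}\f a^*$, the second equality following from $\ta^*\circ\f=-\ta\circ\f^2$ (which yields $\f a^*=a-\ta(\xi)\xi$, hence $\f^2 a=-\f a^*$). The passage from \eqref{T-nat} to \eqref{T0-can} then hinges on the curvature-like identity
\[
(\pi_3+\pi_5)(x,y,z,\f w)=(\pi_1-\pi_2-\pi_4)(x,y,z,w),
\]
valid for every vector $w$. This identity is verified directly from the definitions of the $\pi_i$ by expanding $g(\f\cdot,\f\cdot)=-g(\cdot,\cdot)+\eta\otimes\eta$ and using $g(\xi,\f\cdot)=0$. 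Applying it with $w=a^*$ converts $\frac{1}{4n}(\pi_3+\pi_5)(\cdot,\cdot,\cdot,\f a^*)$ into $\frac{1}{4n}(\pi_1-\pi_2-\pi_4)(\cdot,\cdot,\cdot,a^*)$; adding the $\frac{1}{2n}(\pi_2+\pi_4)(\cdot,\cdot,\cdot,a^*)$ already in \eqref{T-nat} produces exactly $\frac{1}{4n}(\pi_1+\pi_2+\pi_4)(\cdot,\cdot,\cdot,a^*)$, while the $\frac{1}{2n}\pi_5(\cdot,\cdot,\cdot,a)$ and $-\pi_5(\cdot,\cdot,\cdot,\hat a)$ terms carry over unchanged, giving \eqref{T0-can}.

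The main obstacle is the bookkeeping in (i): the $\{\cdot\}_{[x\leftrightarrow y]}$-antisymmetrization in \eqref{T0} combined with the $(y\leftrightarrow z)$-symmetrization built into \eqref{F14511} generates many terms, and careful use of the identities above is needed to collect them into exactly the form \eqref{T-nat-g}. Part (ii), by contrast, reduces to a short direct verification of the curvature-like identity and a simple regrouping.
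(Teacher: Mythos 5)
Your proposal is correct and follows the same route as the paper, whose proof is just the one-line remark that the result follows from \eqref{T-nat}, \eqref{T0} and \eqref{F14511}; you have simply made the computation explicit. In particular, your auxiliary identity $(\pi_3+\pi_5)(x,y,z,\f w)=(\pi_1-\pi_2-\pi_4)(x,y,z,w)$ and the relation $\f a^*=-\f^2 a$ check out, and they yield \eqref{T0-can} exactly as you describe.
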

\begin{proof}
The statement follows immediately from \eqref{T-nat}, \eqref{T0}
and \eqref{F14511}.
\end{proof}

By virtue of \eqref{T0-can} and \eqref{Tij}, we obtain the
following

\begin{prop}\label{prop:T-can-Tij}
The $\f$-canonical connection on an almost contact B-metric
manifold from $\F_1\oplus\F_4\oplus\F_5\oplus\F_{11}$ has a
torsion belonging to the class
$\T_{13}\oplus\T_{31}\oplus\T_{41}$.
\end{prop}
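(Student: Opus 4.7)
The plan is to exploit two facts already set up in the excerpt: (i) the class $\F_1\oplus\F_4\oplus\F_5\oplus\F_{11}$ decomposes the structural tensor $F$ into four basic summands, and (ii) the correspondence \eqref{MFiT0Tjk} matches each of those summands with a specific class of torsion tensors.

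First I would split $F$ into its four $\F_i$-components. By \eqref{F14511} this amounts to decomposing $\ta = \ta\circ\f^2 + \ta(\xi)\eta$ and $\ta^* = \ta^*\circ\f^2 + \ta^*(\xi)\eta$, while $\om$ already has $\F_{11}$-type behaviour since $\om(\xi)=0$. Inserting the $\f^2$-pieces of $\ta$ and $\ta^*$ into \eqref{F14511} reproduces the $\F_1$-formula of \eqref{Fi}; the pieces proportional to $\ta(\xi)\eta$ and $\ta^*(\xi)\eta$ reproduce the $\F_4$- and $\F_5$-formulas; the $\om$-piece reproduces the $\F_{11}$-formula.

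Second, since the assignment $F\mapsto T^0$ given by \eqref{T0} is linear in $F$, the torsion splits correspondingly as $T^0 = T^0_{\F_1}+T^0_{\F_4}+T^0_{\F_5}+T^0_{\F_{11}}$. Applying \eqref{MFiT0Tjk} termwise then yields $T^0_{\F_1}\in\T_{13}$, $T^0_{\F_4},\,T^0_{\F_5}\in\T_{31}$ and $T^0_{\F_{11}}\in\T_{41}$, so that the total torsion lies in $\T_{13}\oplus\T_{31}\oplus\T_{41}$, which is the claim.

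The main obstacle, such as it is, is purely bookkeeping: one must check that the compact expression \eqref{T0-can} indeed distributes, under the contact/$\xi$ splitting of $a$ and $a^*$, into the four per-class torsions displayed just above \eqref{MFiT0Tjk}. As an alternative, one can substitute \eqref{T0-can} directly into the defining identities \eqref{Tij} of $\T_{13}$, $\T_{31}$ and $\T_{41}$ and verify them case by case, using the skew- and $\f$-symmetries of $\pi_1,\pi_2,\pi_4,\pi_5$ together with $\eta(a)=\ta(\xi)$, $\eta(a^*)=\ta^*(\xi)$ and $\eta(\hat a)=0$. Either route is routine and introduces no new ideas beyond what is already in \thmref{thm:can-con} and the classification \eqref{Tij}.
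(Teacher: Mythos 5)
Your proposal is correct, and your fallback route --- substituting \eqref{T0-can} directly into the defining conditions \eqref{Tij} and verifying the identities for $\T_{13}$, $\T_{31}$ and $\T_{41}$ --- is precisely what the paper does: its entire proof is the remark ``by virtue of \eqref{T0-can} and \eqref{Tij}, we obtain the following.'' Your primary route is organized differently: you decompose $F$ via \eqref{F14511} into its four basic-class summands, invoke the linearity of the assignment $F\mapsto T^0$ in \eqref{T0}, and then apply the per-class correspondence \eqref{MFiT0Tjk} termwise. This buys a proof with essentially no computation, at the price of importing \eqref{MFiT0Tjk} and the displayed per-class torsion formulas wholesale from \cite{ManIv38}; note that the paper treats the proposition as something that ``confirms the results in \eqref{MFiT0Tjk}'', i.e.\ it runs the implication in the opposite direction, so you should state explicitly that you are using the external reference rather than the paper's own confirmation, to avoid any appearance of circularity. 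Two minor bookkeeping points: with the convention $\f^2=-\Id+\eta\otimes\xi$ the projection decomposition reads $\ta=-\ta\circ\f^2+\ta(\xi)\eta$ (your version drops the sign, though this does not affect the argument), and the nondegeneracy clauses $t^0\neq 0$, $t^{0*}\neq 0$ in \eqref{MFiT0Tjk} are irrelevant for your purpose --- you only need the containments $T^0_{\F_1}\in\T_{13}$, $T^0_{\F_4},T^0_{\F_5}\in\T_{31}$, $T^0_{\F_{11}}\in\T_{41}$, which hold (trivially) also when the corresponding component vanishes.
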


The latter fact confirms the results in \eqref{MFiT0Tjk}.

\thmref{thm:nat-con} gives an one-to-one map of the set of the
natural connections onto the set of the quadruples
$(\al_1,\al_2,\al_3,\al_4)\in\R^4$. According to
\thmref{thm:can-con}, the $\f$-canonical connection is determined
by $\left(0,\frac{1}{4n},0,0\right)$. The interesting case is the
 connection from the family \eqref{T-nat} for $\left(0,0,0,0\right)$. This
natural connection we call the \emph{standard connection} on
$\M\in\F_1\oplus\F_4\oplus\F_5\oplus\F_{11}$.

Then, according to \eqref{T-nat}, its torsion has the form
\begin{equation}\label{T-st}
\begin{split}
    T'(x,y,z)&=\frac{1}{2n}(\pi_2+\pi_4)(x,y,z,a^*)\\[4pt]
    &+\frac{1}{2n}\pi_5(x,y,z,a)-\pi_5(x,y,z,\hat{a}).
\end{split}
\end{equation}
Using \eqref{T-st} and \eqref{Tij}, we get the following

\begin{prop}\label{prop:T-st-Tij}
The standard connection on an almost contact B-metric manifold
from $\F_1\oplus\F_4\oplus\F_5\oplus\F_{11}$ has a torsion
belonging  to the class
$\T_{11}\oplus\T_{13}\oplus\T_{31}\oplus\T_{41}$.
\end{prop}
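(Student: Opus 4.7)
The plan is to mimic the proof of \propref{prop:T-can-Tij}: starting from the explicit expression \eqref{T-st} for the torsion $T'$ of the standard connection, verify directly that $T'$ satisfies the characteristic identities of $\T_{11}\oplus\T_{13}\oplus\T_{31}\oplus\T_{41}$ from \eqref{Tij}. The most economical way is to write
\[
T'(x,y,z) = T^0(x,y,z) + \tfrac{1}{4n}(\pi_2+\pi_4-\pi_1)(x,y,z,a^*),
\]
which follows immediately by subtracting \eqref{T0-can} from \eqref{T-st}. By \propref{prop:T-can-Tij}, the first summand $T^0$ already contributes the $\T_{13}\oplus\T_{31}\oplus\T_{41}$ portion, so it only remains to show that the extra term $\Delta(x,y,z) := \tfrac{1}{4n}(\pi_2+\pi_4-\pi_1)(x,y,z,a^*)$ lies in $\T_{11}$.

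First I would check the vanishing relations $\Delta(\xi,y,z) = 0$ and $\Delta(x,y,\xi) = 0$ by plugging $x=\xi$ (respectively $z=\xi$) into the defining formulas for $\pi_1,\pi_2,\pi_4$. Here one uses $\f\xi=0$, $\eta(\xi)=1$, $\eta(\f\cdot)=0$, and $g(\cdot,\xi)=\eta(\cdot)$. The $\pi_2$-term vanishes automatically on these substitutions, while the $\pi_1$- and $\pi_4$-terms each reduce to $\eta(y)\ta^*(x)-\eta(x)\ta^*(y)$ (up to sign) and cancel. Next I would verify the $\f$-transformation identities $\Delta(x,y,z) = -\Delta(\f x,\f y,z) = -\Delta(x,\f y,\f z)$ that characterize $\T_{11}$ in \eqref{Tij}, applying the B-metric compatibility $g(\f x,\f y) = \eta(x)\eta(y)-g(x,y)$ and the symmetry $g(x,\f y) = g(\f x,y)$, together with $\f^2 = -\Id + \eta\otimes\xi$. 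These identities reduce the equality to algebraic manipulations on the horizontal distribution $\ker\eta$, where the combination $\pi_2 - \pi_1$ has the required anti-invariance and $\pi_4$ accounts for the $\eta$-dependent corrections absorbed by the vanishing conditions just checked.

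The main obstacle is bookkeeping: the paired classes $\T_{11/12}$, $\T_{21/22}$, $\T_{31/32}$, $\T_{33/34}$ are distinguished from each other only by sign choices in the $\f$-transformation rules, so one has to carefully track each $\f$-substitution to confirm the extra term sits in $\T_{11}$ (upper sign) rather than $\T_{12}$. Once $\Delta\in\T_{11}$ is established, combining with \propref{prop:T-can-Tij} yields $T' = T^0 + \Delta \in \T_{11}\oplus\T_{13}\oplus\T_{31}\oplus\T_{41}$, completing the proof.
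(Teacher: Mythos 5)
Your proposal is correct and follows essentially the route the paper intends: the paper simply asserts the result ``using \eqref{T-st} and \eqref{Tij}'', i.e.\ a direct verification of the characteristic identities, which is exactly what you carry out. Your decomposition $T'=T^0+\frac{1}{4n}(\pi_2+\pi_4-\pi_1)(\cdot,\cdot,\cdot,a^*)$ is a sound small economy (the difference term does satisfy $\Delta(\xi,y,z)=\Delta(x,y,\xi)=0$ and $\Delta(x,y,z)=-\Delta(\f x,\f y,z)=-\Delta(x,\f y,\f z)$, hence lies in $\T_{11}$, the cancellation at $x=\xi$ coming from $\pi_1(\xi,y,z,a^*)=\pi_4(\xi,y,z,a^*)=g(y,z)\ta^*(\xi)-\eta(z)\ta^*(y)$ rather than from the expression you quote), so the argument goes through.
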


There exist a natural connection $\n''$ with torsion $T''$ from
the family \eqref{T-nat} of a remarkable role. This connection is
determined by the condition the $\f$-canonical connection to be
the average connection of $\n'$ and $\n''$. Therefore we have
\begin{equation}\label{T-co-st}
\begin{split}
    T''(x,y,z)&=\frac{1}{2n}(\pi_3+\pi_5)(x,y,z,a)+\frac{1}{2n}(\pi_2+\pi_4)(x,y,z,a^*)\\[4pt]
    &+\frac{1}{2n}\pi_5(x,y,z,a)-\pi_5(x,y,z,\hat{a}).
\end{split}
\end{equation}
Bearing in mind \eqref{T-co-st} and \eqref{Tij}, we get the
following

\begin{prop}\label{prop:T-co-st-Tij}
The natural connection $\n''$ on an almost contact B-metric
manifold from $\F_1\oplus\F_4\oplus\F_5\oplus\F_{11}$ has a
torsion belonging to the class
$\T_{11}\oplus\T_{13}\oplus\T_{31}\oplus\T_{41}$.
\end{prop}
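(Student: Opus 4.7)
The plan is to reduce the statement to \propref{prop:T-st-Tij} by exploiting the transparent decomposition of $T''$ visible upon comparison of \eqref{T-co-st} with \eqref{T-st}. Those two expressions differ in a single summand, so
\[
T''(x,y,z) = T'(x,y,z) + \frac{1}{2n}(\pi_3+\pi_5)(x,y,z,a).
\]
Since \propref{prop:T-st-Tij} already places $T'$ in $\T_{11}\oplus\T_{13}\oplus\T_{31}\oplus\T_{41}$, it suffices to verify that the extra tensor $S(x,y,z):=(\pi_3+\pi_5)(x,y,z,a)$ lies in the same direct sum; the conclusion then follows by additivity of the class decomposition.

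Next I would check that in fact $S\in\T_{11}$ by verifying the three defining conditions from \eqref{Tij}: (a) $S(\xi,y,z)=S(x,y,\xi)=0$; (b) $S(\f x,\f y,z)=-S(x,y,z)$; and (c) $S(x,\f y,\f z)=-S(x,y,z)$. For (a), substituting $x=\xi$ (respectively $z=\xi$) in the explicit expansions of $\pi_3$ and $\pi_5$, and using $\f\xi=0$, $\eta\circ\f=0$, $\eta(\xi)=1$ and the $g$-symmetry $g(\f x,y)=g(x,\f y)$, the surviving $\pi_3$-pieces and $\pi_5$-pieces turn out to be sign-opposite and cancel. For (b), the term $\pi_5(\f x,\f y,z,a)$ vanishes identically because $\eta(\f x)=\eta(\f y)=0$, while $\pi_3(\f x,\f y,z,a)$, expanded via $\f^2=-\id+\eta\otimes\xi$ and $g(\f x,\f y)=-g(x,y)+\eta(x)\eta(y)$, reproduces $-\pi_3(x,y,z,a)$ together with $\eta$-correction pieces that are exactly $-\pi_5(x,y,z,a)$; hence $S(\f x,\f y,z)=-S(x,y,z)$. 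Condition (c) is handled by the same pattern, additionally using $g(\f y,\f^2 z)=-g(\f y,z)$.

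The only real obstacle here is bookkeeping: each substitution $x\mapsto\f x$ or $z\mapsto\f z$ spawns several $\eta$-correction terms from the expansions of $\f^2$ and $g(\f\cdot,\f\cdot)$, and the verification amounts to matching these correction terms against the pieces of $\pi_5$. No identity beyond those collected in \secref{sec:1} is needed; the cancellations are forced by the $g$-symmetry of $\f$ and by $\eta\circ\f=0$. Once $S\in\T_{11}$ is established, the membership $T''\in\T_{11}\oplus\T_{13}\oplus\T_{31}\oplus\T_{41}$ is immediate from the decomposition displayed above together with \propref{prop:T-st-Tij}.
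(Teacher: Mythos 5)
Your proof is correct and is essentially the paper's argument, which is simply a direct verification of the conditions \eqref{Tij} against the explicit formula \eqref{T-co-st}; you merely organize that check efficiently by writing $T''=T'+\frac{1}{2n}(\pi_3+\pi_5)(\cdot,\cdot,\cdot,a)$ and invoking \propref{prop:T-st-Tij} for the $T'$ part. The remaining computation showing $(\pi_3+\pi_5)(\cdot,\cdot,\cdot,a)\in\T_{11}$ checks out (indeed this term equals $g(\f y,\f z)\ta(\f x)+g(y,\f z)\ta(\f^2x)-\{x\leftrightarrow y\}$, from which all three $\T_{11}$-conditions follow), so the argument is sound.
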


\bigskip

\small{ \noindent
\textsl{M. Manev, M. Ivanova\\
Department of Algebra and Geometry\\
Faculty of Mathematics and Informatics\\
University of Plovdiv\\
236 Bulgaria Blvd\\
4003 Plovdiv, Bulgaria}
\\
\texttt{mmanev@uni-plovdiv.bg, mirkaiv@uni-plovdiv.bg} }

\end{document}